\numberwithin{equation}{section}
\numberwithin{figure}{section}
\theoremstyle{plain}
\newtheorem{thm}{Theorem}[section]
  \theoremstyle{definition}
  \newtheorem{defn}[thm]{Definition}
  \theoremstyle{remark}
  \newtheorem{rem}[thm]{Remark}
  \theoremstyle{plain}
  \newtheorem{lem}[thm]{Lemma}
  \theoremstyle{plain}
  \newtheorem{prop}[thm]{Proposition}
  \theoremstyle{plain}
  \newtheorem{cor}[thm]{Corollary}
 \theoremstyle{definition}
  \newtheorem{example}[thm]{Example}
\def\hom{\mathsf{Hom}}
\def\quot{/\!\!/}
\renewcommand{\hom}{\mathsf{Hom}}
\begin{document}

\title[Commuting elements in groups and Higgs bundles]{Commuting elements
in reductive groups and Higgs bundles on abelian varieties}

\author[I. Biswas]{Indranil Biswas}

\address{School of Mathematics, Tata Institute of Fundamental
Research, Homi Bhabha Road, Bombay 400005, India}

\email{indranil@math.tifr.res.in}

\author[C. Florentino]{Carlos Florentino}

\address{Departamento Matem\'atica, Instituto Superior T\'ecnico, Av. Rovisco
Pais, 1049-001 Lisbon, Portugal}

\email{cfloren@math.ist.utl.pt}

\thanks{The second author was partially supported by the projects PTDC/MAT/099275/2008 
and PTDC/MAT/120411/2010, FCT, Portugal.}

\keywords{Abelian variety, character variety, commuting elements, Higgs bundle, moduli space}

\subjclass[2010]{Primary 20G20; Secondary 14J60}

\begin{abstract}
Let $G$ be a connected real reductive algebraic group, and let $K$
be a maximal compact subgroup of $G$. We prove that the conjugation
orbit space $\hom({\mathbb{Z}}^{2d},K)/K$ is a strong deformation
retract of the space $\hom({\mathbb{Z}}^{2d},G)\quot G$ of equivalence
classes of representations of ${\mathbb{Z}}^{2d}$ into $G$. This
is proved by showing that the homotopy type of the moduli space of
principal $G$--Higgs bundles of vanishing rational characteristic
classes on a complex abelian variety of dimension $d$ depends only on $K$.
\end{abstract}
\maketitle

\section{Introduction}

Let $K$ be a compact connected Lie group. The study of the space
of conjugation classes of commuting elements in $K$ has received
much attention recently. Besides its intrinsic geometric interest,
this problem has found applications in mathematical physics, notably
in supersymmetric Yang-Mills theory and mirror symmetry (see, for
example, \cite{BFM,KS,Th}).

Here, we consider the corresponding space for a connected real affine algebraic
reductive group $G$. Note that this includes both the case of a compact
Lie group, and the case of a complex reductive algebraic group. The subset of
$G^m$ defined by all sequences of $m$ commuting elements is clearly identified with the
set of all homomorphisms from ${\mathbb{Z}}^{m}$ to $G$. 
The space of conjugacy classes of $m$ commuting semisimple elements in $G$ can be
identified with the quotient\[
\hom({\mathbb{Z}}^{m},G)\quot G\, .\]
The double quotient notation above means that we consider only those
representations $\rho\,\in\,\hom({\mathbb{Z}}^{m},G)$ whose conjugation
orbit is closed. These are precisely the reductive representations.

If $K$ is a maximal compact subgroup of $G$, and $d$
a non-negative integer, we prove that $\hom({\mathbb{Z}}^{2d},K)/K$
is a strong deformation retract of $\hom({\mathbb{Z}}^{2d},G)\quot G$.
As corollaries, we determine the homotopy type, and provide a sufficient
condition for the path connectedness, for some of these spaces.

When $G$ is a complex reductive algebraic group, this deformation
was obtained recently in \cite{FL}, using a detailed analysis of
the corresponding affine GIT quotient.

Here, instead, we first observe that these spaces of representations
are moduli spaces of local systems, or flat connections, on a manifold
whose fundamental group is $\mathbb{Z}^{2d}$. We choose to work with
a complex abelian variety $X$ of complex dimension $d$ in order
to use non-abelian Hodge theory that identifies this moduli space with the moduli space
of $G$--Higgs bundles, with vanishing rational characteristic classes, over $X$.

Via this dictionary, the above mentioned deformation
retraction of representation spaces actually becomes
an explicit deformation retraction from the moduli space of $G$--Higgs
bundles to the moduli space of $K$--Higgs bundles. Since $K$ is
compact, the latter space is the space of principal $K_{\mathbb{C}}$--bundles
on $X$, where $K_{\mathbb{C}}\,\subset\, G_{\mathbb C}$ is the complexification
of $K$.

\section{Semistable $G$--Higgs bundles}

\subsection{$G$--Higgs bundles}

Let $G$ be a connected real reductive algebraic group. Fix a maximal
compact subgroup $K\,\subset\, G$. Let $\mathfrak{k}$ denote the
Lie algebra of $K$. The orthogonal complement of $\mathfrak{k}$ in the
Lie algebra $\mathfrak{g}$ of $G$ will be denoted by $\mathfrak{m}$. So we
have the Cartan decomposition
\[
\mathfrak{g}\,=\,\mathfrak{k}\oplus\mathfrak{m}\, .
\]
Let $G_{\mathbb{C}}$
be the complexification of $G$; its Lie algebra is
${\mathfrak{g}}_{\mathbb{C}}\,:=\, \mathfrak{g}\otimes_{\mathbb R} {\mathbb C}$. Let
$H\,=\, K_{\mathbb{C}}\,\subset\, G_{\mathbb{C}}$
be the complexification of $K$. We note that $H$ is a complex reductive group, and
$K$ is a maximal compact subgroup of it. The Lie algebra of $H$ will
be denoted by $\mathfrak{h}$. Since $[\mathfrak{k},\mathfrak{m}]\,\subset\,\mathfrak{m}$,
the adjoint representation of $K$ on ${\mathfrak{g}}_{\mathbb{C}}$
preserves the subspace \[
\mathfrak{m}_{\mathbb{C}}\,:=\,\mathfrak{m}\otimes_{\mathbb{R}}\mathbb{C}\,\subset\,{\mathfrak{g}}_{\mathbb{C}}\,.\]
 Therefore, we obtain a linear action of $H$ on $\mathfrak{m}_{\mathbb{C}}$.

Let $X$ be an irreducible smooth projective variety defined over
$\mathbb{C}$. Let $E$ be an algebraic principal $H$--bundle over
$X$. All the objects and morphisms considered henceforth will be
algebraic. Let $E(\mathfrak{m}_{\mathbb{C}})$ be the vector bundle
over $X$ associated to $E$ for the $H$--module $\mathfrak{m}_{\mathbb{C}}$.
The Lie bracket operation \[
\mathfrak{m}_{\mathbb{C}}\otimes_{\mathbb{R}}\mathfrak{m}_{\mathbb{C}}\,\longrightarrow\,\mathfrak{k}_{\mathbb{C}}\,=\,\mathfrak{h}\]
 produces a homomorphism \[
E(\mathfrak{m}_{\mathbb{C}})\otimes E(\mathfrak{m}_{\mathbb{C}})\,\longrightarrow\,\text{ad}(E)\,,\]
 where $\text{ad}(E)\,:=\, E\times^{H}{\mathfrak{h}}$ is the adjoint
vector bundle of $E$.
Combining this homomorphism with the exterior multiplication \[
\Omega_{X}^{1}\otimes\Omega_{X}^{1}\,\longrightarrow\,\Omega_{X}^{2}\,,\]
 we get a homomorphism \begin{equation}
\eta\,:\,(E(\mathfrak{m}^{\mathbb{C}})\otimes\Omega_{X}^{1})^{\otimes2}\,\longrightarrow\,\text{ad}(E)\otimes\Omega_{X}^{2}\,.\label{e1}\end{equation}

Following \cite{GGM}, we define:

\begin{defn}
\label{G-Higgs}A principal $G$--Higgs bundle over $X$ is a pair
$(E\,,\theta)$, where $E$ is a principal $H$-bundle over $X$,
and $\theta$ is a section of $E(\mathfrak{m}_{\mathbb{C}})\otimes\Omega_{X}^{1}$,
such that $\eta(\theta\otimes\theta)\,=\,0$ (see \eqref{e1}). \end{defn}

\begin{rem}
\label{rem:2-special-cases}Two special cases of this definition are
particularly important: 
\begin{itemize}
\item If $G$ is compact, then $G\,=\, K$ and a $G$--Higgs bundle is just
a principal $G_{\mathbb{C}}$--bundle, because $\mathfrak{m}=0$. 

\item If $G\,=\,\mathsf{GL}(n,\mathbb{C})$, then $K\,=\,{\rm U}(n)$,
$H\,=\, G$, and $\mathfrak{g}\,=\,\mathfrak{m}_{\mathbb{C}}$. Therefore,
if $(E\,,\theta)$ is a $G$--Higgs bundle, then $E$ is a vector
bundle of rank $n$ over $X$, and $E(\mathfrak{m}_{\mathbb{C}})\,=\,\text{End}(E)\,=\, E\otimes E^{*}$,
so $\theta\,\in\, H^{0}(X,\ \text{End}(E)\otimes\Omega_{X}^{1})$.
Therefore, in this case $(E\,,\theta)$ is a Higgs vector bundle of
rank $n$ \cite{Hi1}, \cite{Sim}. 
\end{itemize}
\end{rem}

\subsection{Semistability}

Fix a very ample line bundle $L$ over $X$. For a torsionfree coherent
sheaf $F$ on $X$, the degree of $F$ is defined to be \[
\deg(F)\,:=\,(c_{1}(F)\cup c_{1}(L)^{d-1})\cap[X]\,\in\,\mathbb{Z}\,,\]
 where $d\,=\,\dim_{\mathbb{C}}X$. If $F$ is a torsionfree coherent
sheaf defined on a Zariski open subset $\iota\,:\, U\,\hookrightarrow\, X$
such that the complement $X\setminus \iota(U)$ is of codimension at least
$2$, then define \[
\deg(F)\,:=\,\deg(\iota_{*}F)\,;\]
 note that the direct image $\iota_{*}F$ is a torsionfree coherent
sheaf on $X$.

\begin{defn}[\cite{Ra1}]\label{def:ss-H-bundle}
A principal $H$--bundle $E_{H}$
over $X$ is called \textit{semistable} (respectively, \textit{stable})
if for every triple $(P\,,U\,,E_{P})$, where 
\begin{itemize}
\item $\, P\,\subset\, H$ is a maximal proper parabolic subgroup, 
\item $\, U\subset X$ is a Zariski open subset such that the complement
$X\setminus U$ is of codimension at least $2$, and 
\item $\, E_{P}\,\subset\, E_{H}\vert_{U}$ is a reduction of structure
group, over $U$, of $E_{H}$ to the subgroup $P$, 
\end{itemize}
the inequality \begin{equation}
{\rm deg}(\text{ad}(E_{H}\vert_{U})/\text{ad}(E_{P}))\,\geq\,0~\ {\rm (respectively,~}~{\rm deg}(\text{ad}(E_{H}\vert_{U})/\text{ad}(E_{P}))\,>\,0{\rm )}\,,\label{inq}\end{equation}
 holds, where $\text{ad}(E_{H})$ and $\text{ad}(E_{P})$ are the
adjoint vector bundles of $E_{H}$ and $E_{P}$ respectively. 
\end{defn}
A Levi factor of a parabolic subgroup $P$ of $H$ is a maximal connected
complex reductive subgroup of $P$; it is unique up to a conjugation.

Let $E_{P}\,\subset\, E_{H}$ be a reduction of structure group of
a principal $H$--bundle $E_{H}\,\longrightarrow\, X$ to a parabolic
subgroup $P\,\subset\, H$. This reduction is called \textit{admissible}
if for every character $\chi$ of $P$ trivial on the center of $H$,
the associated line bundle $E_{P}(\chi)$ is of degree zero \cite[p. 307, Definition 3.3]{Ra}. 

\begin{defn}
A principal $H$--bundle $E_{H}$ over $X$ is called \textit{polystable}
if either $E_{H}$ is stable or there is a parabolic subgroup $P$
of $H$ and a reduction \[
E_{L(P)}\,\subset\, E_{X}\]
 over $X$ of structure group of $E_{H}$ to the Levi factor $L(P)$
such that 
\begin{enumerate}
\item the principal $L(P)$--bundle $E_{L(P)}$ is stable, and 
\item the extension of structure group of $E_{L(P)}$ to $P$, constructed
using the inclusion of $L(P)$ in $P$, is an admissible reduction
of $E_{H}$ to $P$. 
\end{enumerate}
\end{defn}

Let $E_{H}$ be a principal $H$--bundle on $X$
and let \begin{equation}
E_{H}({\mathfrak{g}}_{\mathbb{C}})\,=\, E_{H}\times^{H}{\mathfrak{g}}_{\mathbb{C}}\,\longrightarrow\, X\label{ehgc}\end{equation}
be the vector bundle associated to $E_{H}$ for the $H$--module ${\mathfrak{g}}_{\mathbb{C}}$.
If $E_{H}$ is semistable, then the vector bundle $E_{H}({\mathfrak{g}}_{\mathbb{C}})$
is semistable \cite[p. 285, Theorem 3.18]{RR}. The converse statement
that $E_{H}$ is semistable if $E_{H}({\mathfrak{g}}_{\mathbb{C}})$
is so, is obvious. Similarly, if $E_{H}$ is polystable, then $E_{H}({\mathfrak{g}}_{\mathbb{C}})$
is polystable \cite[p. 285, Theorem 3.18]{RR} (see also \cite[p. 224, Theorem 3.9]{AB}).
Again, the converse statement holds.

The definitions of stable, semistable and polystable $G$--Higgs bundles
are given in \cite{GGM}. Instead of reproducing these definitions
(they are very involved), we recall their relevant properties.

\begin{rem}
\label{rem:adjoint}
Let $(E_{H}\,,\theta)$ be a $G$--Higgs bundle on $X$. Let $E_{G_{\mathbb{C}}}$
be the principal $G_{\mathbb{C}}$--bundle over $X$ obtained by extending
the structure group of $E_{H}$ using the inclusion of $H$ in $G_{\mathbb{C}}$.
The adjoint vector bundle $\text{ad}(E_{G_{\mathbb{C}}})$ is identified
with $E_{H}({\mathfrak{g}}_{\mathbb{C}})$ defined in \eqref{ehgc}.
Using the Lie algebra structure of the fibers of $E_{H}({\mathfrak{g}}_{\mathbb{C}})$,
the Higgs field $\theta$ defines a Higgs field on the vector bundle
$E_{H}({\mathfrak{g}}_{\mathbb{C}})$. This Higgs field on $E_{H}({\mathfrak{g}}_{\mathbb{C}})$
will be denoted by $\widetilde{\theta}$. The principal $G$--Higgs
bundle $(E_{H}\,,\theta)$ is semistable (respectively, polystable)
if and only if the Higgs vector bundle
$(E_{H}({\mathfrak{g}}_{\mathbb{C}})\,,\widetilde{\theta})$
is semistable (respectively, polystable) \cite[p. 12]{Go}. We will also give a proof of it.

If $(E_{H}({\mathfrak{g}}_{\mathbb{C}})\,,\widetilde{\theta})$
is semistable (respectively, polystable) then it is easy to show that
$(E_{H}\,,\theta)$ is semistable (respectively, polystable). To prove the converse
statements, first assume that $(E_{H}\,,\theta)$ is semistable. To prove
that $(E_{H}({\mathfrak{g}
}_{\mathbb{C}})\,,\widetilde{\theta})$ is semistable, consider the
Harder--Narasimhan filtration of it. The Harder--Narasimhan reduction of $(E_{H}\,,\theta)$
is constructed from the Harder--Narasimhan filtration of $(E_{H}({\mathfrak{g}
}_{\mathbb{C}})\,,\widetilde{\theta})$ (see \cite[Theorem 1]{DP}). Therefore, if
$(E_{H}({\mathfrak{g} }_{\mathbb{C}})\,,\widetilde{\theta})$ is not semistable, then 
$(E_{H}\,,\theta)$ is not semistable. So we conclude that
$(E_{H}({\mathfrak{g} }_{\mathbb{C}})\,,\widetilde{\theta})$ is semistable.

Next assume that $(E_{H}\,,\theta)$ is polystable. Therefore, $(E_{H}\,,\theta)$
has a hermitian Yang--Mills--Higgs connection \cite[p. 554, Theorem 4.6]{BS}. The
connection on $(E_{H}({\mathfrak{g} }_{\mathbb{C}})\,,\widetilde{\theta})$ induced
by a hermitian Yang--Mills--Higgs connection on $(E_{H}\,,\theta)$ is again
hermitian Yang--Mills--Higgs. Therefore, $(E_{H}({\mathfrak{g} }_{\mathbb{C}})\,,
\widetilde{\theta})$ is polystable.
\end{rem}

\section{$G$--Higgs bundles on abelian varieties}

{}From now on, $X$ will be a complex abelian variety of complex dimension
$d$. Let $V_{0}\,=\,(\Omega_{X}^{1})_{0}$ be the cotangent space
at the origin $0\,\in\, X$. The cotangent bundle $\Omega_{X}^{1}$
is the trivial vector bundle on $X$ with fiber $V_{0}$.

A principal $G$-Higgs bundle on $X$ is a pair $(E\,,\theta)$, where
$E$ is a principal $H$--bundle over $X$, and \[
\theta\,\in\, H^{0}(X,\, E(\mathfrak{m}_{\mathbb{C}})\otimes\Omega_{X}^{1})\,=\,
H^{0}(X,\, E(\mathfrak{m}_{\mathbb{C}}))\otimes_{\mathbb{C}}V_{0}\]
such that $\eta(\theta^{\otimes2})\,=\,0$. Fixing a basis of $V_{0}$,
we may identify an element $\theta$ of the space of sections $H^{0}(X,\,
E(\mathfrak{m}_{\mathbb{C}}))\otimes V_{0}$
with ordered $d$ sections $\{\theta_{1}\,,\cdots\,,\theta_{d}\}$
of $E(\mathfrak{m}_{\mathbb{C}})$. The above condition $\eta(\theta^{\otimes2})\,=\,0$
is equivalent to the condition that $[\theta_{i}\,,\theta_{j}]\,=\,0$
for all $1\,\leq\, i\,,j\,\leq\, d$.

\subsection{The case $G=\mathsf{GL}(n,\mathbb{C})$}

We start with $G\,=\,\mathsf{GL}(n,\mathbb{C})$, which is the case
of Higgs vector bundles of rank $n$.

For a torsionfree nonzero coherent sheaf $F$ on $X$, define \[
\mu(F)\,:=\,\frac{{\rm deg}(E)}{{\rm rank}(E)}\,.\]
 Let $(E\,,\theta)$ be a Higgs vector bundle on $X$ of rank $n$.
Recall that $E$ is semistable if $\mu(F)\,\leq\,\mu(E)$ for every
nonzero subsheaf $F\,\subset\, E$ with $\theta(F)\,\subset\, F\otimes\Omega_{X}^{1}$. 
\begin{lem}
\label{lem:vector-case} A Higgs vector bundle $(E,\theta)$ over
$X$ is semistable if and only if the vector bundle $E$ is semistable. \end{lem}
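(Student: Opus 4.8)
The plan is to prove the two implications separately. The forward implication is formal: if the vector bundle $E$ is semistable, then $\mu(F)\,\leq\,\mu(E)$ holds for \emph{every} nonzero subsheaf $F\,\subset\, E$, and in particular for every subsheaf satisfying $\theta(F)\,\subset\, F\otimes\Omega_{X}^{1}$. Hence $(E\,,\theta)$ is semistable as a Higgs bundle. No property of $X$ enters here.

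The substance of the lemma is the converse. Assume that $(E\,,\theta)$ is a semistable Higgs bundle but that $E$ is \emph{not} semistable as a vector bundle, and let $F_{1}\,\subset\, E$ be the maximal destabilizing subsheaf, i.e. the first nonzero term of the Harder--Narasimhan filtration $0\,=\,F_{0}\,\subset\, F_{1}\,\subset\,\cdots\,\subset\, F_{k}\,=\, E$. Then $F_{1}$ is semistable with $\mu(F_{1})\,=\,\mu_{\max}(E)\,>\,\mu(E)$, while every Harder--Narasimhan slope of the quotient $E/F_{1}$ is strictly smaller than $\mu(F_{1})$. I would derive a contradiction by showing that $F_{1}$ is preserved by $\theta$.

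The point at which the geometry of $X$ is used is the following. Since $X$ is an abelian variety, $\Omega_{X}^{1}$ is the trivial bundle with fiber $V_{0}$; fixing a basis of $V_{0}$ we may write $\theta\,=\,\sum_{i=1}^{d}\theta_{i}\otimes v_{i}$ with each $\theta_{i}\,\in\, H^{0}(X\,,\text{End}(E))$ a global endomorphism of $E$. For each $i$ consider the composite $F_{1}\hookrightarrow E\overset{\theta_{i}}{\longrightarrow}E\longrightarrow E/F_{1}$. This is a homomorphism from the semistable sheaf $F_{1}$, of slope $\mu(F_{1})$, into a sheaf whose maximal Harder--Narasimhan slope is strictly less than $\mu(F_{1})$; by the standard vanishing of homomorphisms between semistable sheaves of decreasing slopes, this composite vanishes. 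Hence $\theta_{i}(F_{1})\,\subset\, F_{1}$ for all $i$, so $\theta(F_{1})\,\subset\, F_{1}\otimes\Omega_{X}^{1}$, and $F_{1}$ is a $\theta$--invariant subsheaf with $\mu(F_{1})\,>\,\mu(E)$. This contradicts the semistability of $(E\,,\theta)$, so $E$ must be semistable.

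The one nontrivial step, and the only place where I expect any difficulty, is precisely the reduction of $\theta$ to a tuple of honest global endomorphisms: it is the triviality of $\Omega_{X}^{1}$ that makes each $\theta_{i}$ an endomorphism of $E$, after which the maximal destabilizing subsheaf is automatically $\theta$--invariant. On a general base this step has no analogue and the statement is in fact false, so using the hypothesis that $X$ is an abelian variety is essential.
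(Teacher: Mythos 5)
Your proof is correct and follows essentially the same route as the paper's: both reduce the converse to showing that the first step $E_{1}$ of the Harder--Narasimhan filtration is $\theta$--invariant, using the triviality of $\Omega_{X}^{1}$ together with the vanishing $H^{0}(X,\, Hom(E_{1},E/E_{1}))\,=\,0$ coming from the slope inequality $\mu_{\min}(E_{1})\,>\,\mu_{\max}(E/E_{1})$. Your componentwise decomposition $\theta\,=\,\sum_{i}\theta_{i}\otimes v_{i}$ is just an explicit rewriting of the paper's statement that $H^{0}(X,\, Hom(E_{1},E/E_{1}))\otimes_{\mathbb{C}}V_{0}\,=\,0$.
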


\begin{proof}
If $E$ is semistable, then clearly $(E\,,\theta)$ is semistable.
To prove the converse, let $(E\,,\theta)$ be a semistable Higgs vector
bundle on $X$. Assume that the vector bundle $E$ is not semistable.
Let \[
0\,=\, E_{0}\,\subset\, E_{1}\,\subset\,\cdots\,\subset\, E_{m-1}\,\subset\, E_{m}\,=\, E\]
 be the Harder--Narasimhan filtration of $E$. We note that $m\,\geq\,2$
because $E$ is not semistable.

If $F_1$ and $F_2$ are nonzero torsionfree coherent sheaves on $X$ with
$\mu_{\rm min}(F_1)\, >\, \mu_{\rm max}(F_2)$, then
$$
H^0(X,\, Hom(F_1\, ,F_2))\,=\, 0\, .
$$
Since $\mu_{\rm max}(E/E_1)\,=\, \mu(E_2/E_1) \,<\, \mu(E_{1})\,=\,
\mu_{\rm min}(E_1)$, we conclude that
\[
H^{0}(X,\, Hom(E_{1}\,,E/E_{1}))=\,0\,.\]
 Therefore, $H^{0}(X,\, Hom(E_{1}\,,E/E_{1}))\,\otimes_{\mathbb{C}}V_{0}\,=\,0$.
Consequently, \[
\theta(E_{1})\,\subset\, E_{1}\otimes\Omega_{X}^{1}\,.\]
 Hence, the subsheaf $E_{1}\,\subset\, E$ contradicts semistability
of $(E\,,\theta)$. Therefore, we conclude that the vector bundle $E$ is semistable. 
\end{proof}

\subsection{The general case of real reductive $G$}

Now, we return to the $G$-Higgs bundles, where
$G$ is a real reductive group. 
\begin{prop}
\label{prop1} A $G$-Higgs bundle $(E\,,\theta)$ on the abelian
variety $X$ is semistable if and only if the principal $H$--bundle
$E$ is a semistable.\end{prop}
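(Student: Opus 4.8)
The plan is to reduce the statement for a general real reductive $G$ to the vector-bundle case already settled in Lemma~\ref{lem:vector-case}, by passing through the adjoint bundle and invoking the dictionary recorded in Remark~\ref{rem:adjoint}. One direction is easy: if the principal $H$--bundle $E$ is semistable, then I must show the $G$--Higgs bundle $(E\,,\theta)$ is semistable, and this is immediate because the Higgs field only imposes an additional invariance condition on the reductions one tests, so a semistable underlying bundle is a fortiori semistable as a Higgs bundle. The content is in the converse.

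First I would translate semistability of $(E\,,\theta)$ into semistability of the associated Higgs vector bundle $(E_H(\mathfrak{g}_{\mathbb{C}})\,,\widetilde{\theta})$. By Remark~\ref{rem:adjoint}, the $G$--Higgs bundle $(E\,,\theta)$ is semistable if and only if the Higgs vector bundle $(E_H(\mathfrak{g}_{\mathbb{C}})\,,\widetilde{\theta})$ is semistable. Next I would apply Lemma~\ref{lem:vector-case} on the abelian variety $X$: a Higgs vector bundle over $X$ is semistable if and only if the underlying vector bundle is semistable. Hence $(E_H(\mathfrak{g}_{\mathbb{C}})\,,\widetilde{\theta})$ semistable forces the vector bundle $E_H(\mathfrak{g}_{\mathbb{C}})$ to be semistable, \emph{with no reference to the Higgs field at all}. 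This is the crucial leverage that the abelian variety gives: on a general base, a semistable Higgs bundle need not have semistable underlying bundle, but Lemma~\ref{lem:vector-case} says it does over $X$.

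Finally I would pass back from the adjoint bundle to the principal bundle. The associated vector bundle $E_H(\mathfrak{g}_{\mathbb{C}})$ is canonically identified with the adjoint bundle $\text{ad}(E_{G_{\mathbb{C}}})$, and since $G_{\mathbb{C}}$ is reductive, semistability of $\text{ad}(E_{G_{\mathbb{C}}})$ is equivalent to semistability of $E_{G_{\mathbb{C}}}$; this is exactly the Ramanan--Ramanathan correspondence \cite[p. 285, Theorem 3.18]{RR} and its evident converse, already quoted in the excerpt. It then remains to observe that $E_{G_{\mathbb{C}}}$ is semistable if and only if the principal $H$--bundle $E$ is semistable. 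This last equivalence also follows from \cite[p. 285, Theorem 3.18]{RR} applied to $E$ itself, whose adjoint bundle $E_H(\mathfrak{h})=\text{ad}(E)$ sits inside $E_H(\mathfrak{g}_{\mathbb{C}})$; more directly, semistability is preserved under the extension of structure group along $H\hookrightarrow G_{\mathbb{C}}$, and one checks the converse using that $\mathfrak{h}$ is a direct summand of $\mathfrak{g}_{\mathbb{C}}$ as an $H$--module.

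The main obstacle I anticipate is precisely this last comparison between semistability of $E$ as an $H$--bundle and semistability of its adjoint vector bundle or of its $G_{\mathbb{C}}$--extension, because the parabolic subgroups of $H$ and of $G_{\mathbb{C}}$ do not match up, and one must be careful that the degree inequalities tested for reductions to maximal parabolics of $H$ are controlled by the semistability of the adjoint bundle. The clean way to circumvent this is to invoke the cited Ramanan--Ramanathan theorem in both directions at the level of $H$--bundles: $E$ is semistable if and only if $E_H(\mathfrak{g}_{\mathbb{C}})$ is semistable, since $\mathfrak{g}_{\mathbb{C}}$ is a faithful $H$--representation containing $\mathfrak{h}$ as a summand. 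Chaining these equivalences, I obtain the desired statement, and the whole argument reduces to one genuinely new input, Lemma~\ref{lem:vector-case}, with everything else being standard transport along associated-bundle constructions.
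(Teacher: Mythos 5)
Your proposal is correct and follows essentially the same route as the paper: pass to the adjoint Higgs vector bundle via Remark~\ref{rem:adjoint}, apply Lemma~\ref{lem:vector-case} on the abelian variety, and use the Ramanan--Ramanathan equivalence between semistability of $E$ and of $E(\mathfrak{g}_{\mathbb{C}})$ already recorded after Definition~\ref{def:ss-H-bundle}. The detour through $E_{G_{\mathbb{C}}}$ in your last step is unnecessary, as you yourself note; the paper chains the three equivalences directly at the level of the $H$--bundle.
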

\begin{proof}
As mentioned in Remark \ref{rem:adjoint}, a $G$--Higgs bundle $(E_{H}\,,\theta)$
is semistable if and only if the Higgs vector bundle $(E_{H}({\mathfrak{g}}_{\mathbb{C}})\,
,\widetilde{\theta})$
is semistable. We also noted that $E_{H}$ is semistable if and only if $E_{H}({\mathfrak{g}}_{\mathbb{C}})$
is semistable. Therefore, the proposition follows from Lemma \ref{lem:vector-case}. \end{proof}
\begin{rem}
As mentioned before, there is an analog of Harder--Narasimhan filtration for principal
Higgs bundles \cite[Theorem 1]{DP}. Using it, Proposition \ref{prop1}
can be directly proved by simply imitating the proof of Lemma \ref{lem:vector-case}. 
\end{rem}

\subsection{Deformation retraction of moduli spaces}

Suppose we fix a numerical type of principal $G$--bundles over
$X$, denoted by $\tau$. Let $\mathcal{P}_{X}^{\tau}(H)$ denote
the moduli space of semistable principal $H$--bundles over the abelian
variety $X$, of type $\tau$. Let $\mathcal{H}_{X}^{\tau}(G)$
be the moduli space of semistable $G$--Higgs bundles $(E,\theta)$
on $X$, such that $E$ has type $\tau$. These moduli
spaces have been constructed in \cite{Ra,Sch}. We note that
$\mathcal{P}_{X}^{\tau}(H)\,\subset\, \mathcal{H}_{X}^{\tau}(G)$ by
setting the Higgs field to be zero.

\begin{thm}
\label{thm:SDR-AV} There is a strong deformation retraction from
$\mathcal{H}_{X}^{\tau}(G)$ to $\mathcal{P}_{X}^{\tau}(H)$. \end{thm}
\begin{proof}
Let $(E\,,\theta)\,\in\,\mathcal{H}_{X}^{\tau}(G)$ be a semistable
$G$--Higgs bundle. From Proposition \ref{prop1} it follows that
for every $t\,\in\,\mathbb{C}$, the $G$--Higgs bundle $(E\,,t\theta)$
is semistable, and is obviously in $\mathcal{H}_{X}^{\tau}(G)$. For
$t\,\in\,[0\,,1]$, consider the family of continuous maps \[
\varphi_{t}\,:\,\mathcal{H}_{X}^{\tau}(G)\,\longrightarrow\,\mathcal{H}_{X}^{\tau}(G)\,,~\ (E\,,\theta)\,\longmapsto\,(E,t\theta)\,.\]
 Since the $G$-Higgs bundles of the form $(E,0)$ are precisely the
principal $H$--bundles, this $1$--parameter family of maps is a
strong deformation retraction from $\mathcal{H}_{X}^{\tau}(G)$ to
$\varphi_{0}(\mathcal{H}_{X}^{\tau}(G))\,=\,\mathcal{P}_{X}^{\tau}(H)$. \end{proof}

The numerical types of principal $H$--bundles depend only on the maximal compact
subgroup $K$ because $H/K$ is contractible.

\begin{cor}
\label{cor-i} For a fixed abelian variety $X$, and a numerical type
$\tau$, the homotopy type of the moduli space of $G$--Higgs bundles
on $X$ depends only on the maximal compact subgroup $K$ of $G$.
In other words, if $G_{1}$ and $G_{2}$ are two real reductive groups
with isomorphic maximal compact subgroup $K$, then the moduli spaces
$\mathcal{H}_{X}^{\tau}(G_{1})$ and $\mathcal{H}_{X}^{\tau}(G_{2})$
have the same homotopy type.\end{cor}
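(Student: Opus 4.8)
The plan is to deduce the corollary directly from Theorem \ref{thm:SDR-AV} together with the observation about numerical types stated just above it. First I would apply the theorem separately to each group: for $i\,=\,1,2$ it furnishes a strong deformation retraction of $\mathcal{H}_{X}^{\tau}(G_{i})$ onto $\mathcal{P}_{X}^{\tau}(H_{i})$, where $H_{i}\,=\,K_{\mathbb{C}}$ is the complexification of the maximal compact subgroup of $G_{i}$. In particular, each moduli space of $G_{i}$--Higgs bundles is homotopy equivalent to the corresponding moduli space of semistable principal $H_{i}$--bundles with the Higgs field removed.

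The second step is to note that the complexification $H\,=\,K_{\mathbb{C}}$ is determined, up to isomorphism of complex reductive groups, by $K$ alone. Hence the hypothesis that $G_{1}$ and $G_{2}$ share an isomorphic maximal compact subgroup $K$ yields an isomorphism $H_{1}\,\cong\, H_{2}\,=:\, H$. Under this isomorphism the spaces $\mathcal{P}_{X}^{\tau}(H_{1})$ and $\mathcal{P}_{X}^{\tau}(H_{2})$ become moduli spaces of semistable principal bundles for one and the same group $H$ over the same variety $X$ and for the same numerical type, so they are canonically isomorphic as varieties, and in particular homeomorphic.

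The only point requiring care, and the step I expect to be the main obstacle, is the consistent matching of the numerical type $\tau$ across $G_{1}$ and $G_{2}$. Here I would invoke the remark preceding the corollary: since $H/K$ is contractible, and likewise $G/K$ is contractible by the Cartan decomposition, the topological classifications of principal $H$--bundles, of principal $G$--bundles, and of principal $K$--bundles over $X$ all coincide and depend only on $K$. Thus a numerical type $\tau$ is intrinsically a datum attached to $K$, and the isomorphism $H_{1}\,\cong\, H_{2}$ carries the $\tau$--labelled component of $\mathcal{P}_{X}^{\tau}(H_{1})$ to that of $\mathcal{P}_{X}^{\tau}(H_{2})$. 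This is precisely what legitimizes using the same symbol $\tau$ for both groups.

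Combining the three steps, I obtain the chain of homotopy equivalences
\[
\mathcal{H}_{X}^{\tau}(G_{1})\,\simeq\,\mathcal{P}_{X}^{\tau}(H)\,\simeq\,\mathcal{H}_{X}^{\tau}(G_{2})\,,
\]
where the two outer equivalences come from Theorem \ref{thm:SDR-AV} applied to $G_{1}$ and $G_{2}$, and the middle identification is the homeomorphism induced by $H_{1}\,\cong\, H_{2}$. Since homotopy equivalence is transitive, $\mathcal{H}_{X}^{\tau}(G_{1})$ and $\mathcal{H}_{X}^{\tau}(G_{2})$ have the same homotopy type, proving the corollary.
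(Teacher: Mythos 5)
Your proposal is correct and follows essentially the same route as the paper: apply Theorem \ref{thm:SDR-AV} to each $G_{i}$ to retract $\mathcal{H}_{X}^{\tau}(G_{i})$ onto $\mathcal{P}_{X}^{\tau}(H)$, identify the two target spaces via $H_{1}\cong H_{2}$, and conclude by transitivity. Your extra care about matching the numerical type $\tau$ is exactly the point the paper disposes of in the sentence preceding the corollary (the contractibility of $H/K$), so nothing is missing on either side.
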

\begin{proof}
Since $G_{1}$ and $G_{2}$ have isomorphic maximal compact $K$ with
complexification $H$, by Theorem \ref{thm:SDR-AV} both $\mathcal{H}_{X}^{\tau}(G_{1})$
and $\mathcal{H}_{X}^{\tau}(G_{2})$ are homotopy equivalent to $\mathcal{P}_{X}^{\tau}(H)$. \end{proof}

Denote by $\mathcal{P}_{X}^{0}(G)$ and $\mathcal{H}_{X}^{0}(G)$
respectively, the moduli spaces of topologically trivial principal
$G$--bundles and topologically trivial $G$--Higgs bundles, respectively.
\begin{example}
\label{exa:Laszlo}Suppose $X$ is an elliptic curve. It is known
that the moduli space of topologically trivial principal $H$-bundles on $X$
is isomorphic to a complex weighted projective space. More precisely,
there is an isomorphism\[
\mathcal{P}_{X}^{0}(H)\cong\left(\Gamma(T_{H})\otimes_{\mathbb{Z}}X\right)/W\]
where $\Gamma(T_{H})$ is the group of 1-parameter subgroups of a maximal
torus $T_{H}\subset H$, and $W$ is its Weyl group (see \cite{La}).
By Theorem \ref{thm:SDR-AV}, the moduli space $\mathcal{H}_{X}^{0}(G)$ has the homotopy
type of a weighted projective space whose dimension coincides with the dimension of $T_{H}$.
\end{example}

\section{Commuting elements in reductive groups}

As before, $G$ denotes a connected real reductive algebraic group. Let
$(g_{1}\,,\cdots\,,g_{m})\,\in\, G^{m}$ be a sequence of commuting
elements of $G$, that is $g_{i}g_{j}=g_{j}g_{i}$ for all $i\,,j\,\in\,[1,\, m]$.
The set of all such $(g_{1}\,,\cdots\,,g_{m})\in G^{m}$ is in bijection
with the representation space $\hom(\mathbb{Z}^{m},G)$.

Since $G$ is algebraic, and the commutation relations are algebraic,
the space of representations $\hom(\mathbb{Z}^{m},G)$ has the structure
of a real algebraic subvariety (not necessarily irreducible) of $G^{m}$.

Let $\hom(\mathbb{Z}^{m},G)^{+}\,\subset\,\hom(\mathbb{Z}^{m},G)$
denote the subspace consisting of reductive representations. Recall that a
representation is reductive if its composition with the adjoint representation
is completely reducible. 
\begin{rem}
Since ${\mathbb{Z}}^{m}$ is an abelian group, 
a representation $\rho:{\mathbb{Z}}^{m}\,\longrightarrow\, G$ is
reductive if and only if the corresponding $m$ commuting elements of $G$ 
are all semisimple.
Indeed, if all these elements are semisimple, any such representation 
$\rho$ is completely reducible;
conversely, any non-semisimple element of $G$ would determine a non-split
invariant filtration of a given representation of $G$, which would be preserved by
all elements in $\rho({\mathbb{Z}}^{m})$, as they commute, so $\rho$ would not be reductive.
\end{rem}

We can form the quotient \[
R_{\mathbb{Z}^{m}}(G)\,:=\,\hom(\mathbb{Z}^{m},G)^{+}/G\,,\]
where $G$ acts by conjugation on $\hom(\mathbb{Z}^{m},G)^{+}$. Each orbit
of $\hom(\mathbb{Z}^{m},G)^{+}$ is closed under the conjugation action,
and so we have a well defined Hausdorff topology on the quotient $R_{\mathbb{Z}^{m}}(G)$
(see \cite{RS}). Moreover, this ensures that the topological space
$R_{\mathbb{Z}^{m}}(G)$ can be identified with the affine GIT quotient,
which is denoted by \[
\hom(\mathbb{Z}^{m},G)\quot G\,.\]
In our context, the above GIT quotient was defined in \cite{Lu} and in \cite[$\S$7]{RS}.
In particular, it is homeomorphic to a closed semialgebraic set in some real vector space
\cite[Thm. 7.7]{RS}.  

From the analogy with the case when $G$ is complex reductive, we
call the space $R_{\mathbb{Z}^{m}}(G)$ the \emph{$G$-character variety
of $\mathbb{Z}^{m}$}. Observe that all representations
in $\hom(\mathbb{Z}^{2d},K)$ are reductive, because $K$ is compact. So
$R_{\mathbb{Z}^{m}}(K)$ is just the usual orbit space \[
R_{\mathbb{Z}^{m}}(K)\,=\,\hom(\mathbb{Z}^{m},K)/K\,.\]

Let $\mathcal{H}_{X}^{F}(G)$ denote the moduli space of
semistable $G$--Higgs bundles on $X$
satisfying the numerical condition that all the rational characteristic classes vanish.
Let $\mathcal{P}_{X}^{F}(K_{\mathbb{C}})$ denote the moduli space of
semistable principal $H$--bundles Higgs bundles on $X$
satisfying the numerical condition that all the rational characteristic classes vanish.

The relationship between $G$--Higgs bundles and $R_{\mathbb{Z}^{m}}(G)$
is provided by the non-abelian Hodge theory (see \cite[Theorem 3.32]{GGM},
\cite{Sim}, \cite{Hi1}, \cite{H1}, \cite{BG}). For an abelian
variety $X$ of dimension $d$, whose fundamental group $\pi_{1}(X)$
is $\mathbb{Z}^{2d}$, this result states that there is a homeomorphism
between $R_{\mathbb{Z}^{2d}}(G)$ and the moduli space of $G$-Higgs
bundles on $X$ of vanishing rational characteristic classes, \begin{equation}
\hom(\pi_{1}(X),G)\quot G\,=\,
R_{\mathbb{Z}^{2d}}(G)\,\stackrel{\sim}{\longrightarrow}\,\mathcal{H}_{X}^{F}(G)\,.\label{eq:NAHT}\end{equation}

\begin{rem}
Although \cite[Theorem 3.32]{GGM} is proved under the assumption
that the base is a compact Riemann surface, it extends to $X$ as
follows. Take any polystable $G$--Higgs bundle $(E\,,\theta)$ on
$X$ such that all the rational characteristic classes of $E$ vanish.
Since the corresponding $G_{\mathbb{C}}$--Higgs bundle is polystable,
it gives a flat principal $G_{\mathbb{C}}$--bundle $(F\,,D)$ on
$X$ \cite{Sim}, \cite[Theorem 1.1]{BG}. Let $C\,\subset\, X$ be
a smooth ample curve. Let $(E'\,,\theta')$ be the restriction of
$(E\,,\theta)$ to $C$. It is polystable and the corresponding flat
principal $G_{\mathbb{C}}$--bundle $(F'\,,D')$ is the restriction
of $(F\,,D)$ to $C$. The monodromy of $(F'\,,D')$ lies in $G$
\cite[Theorem 3.32]{GGM}, and the homomorphism $\pi_{1}(C)\,\longrightarrow\,\pi_{1}(X)$
given by the inclusion of $C$ in $X$ is surjective. Therefore, the
monodromy of $(F\,,D)$ lies in $G$. To prove the converse, let $(F\,,D)$
be a flat $G$--bundle on $X$ such that the monodromy is reductive.
Let $(E\,,\theta)$ be the corresponding polystable $G_{\mathbb{C}}$--Higgs
bundle. For every smooth ample curve $C\,\subset\, X$, the restriction
of $(E\,,\theta)$ to $C$ is $G$--Higgs bundle by \cite[Theorem 3.32]{GGM}.
Therefore, $(E\,,\theta)$ itself is a $G$--Higgs bundle. \end{rem}

\begin{thm}
\label{thm2} Let $G$ be a real algebraic reductive group. For any
$d\in\mathbb{N}$, there is a strong deformation retraction from $\hom(\mathbb{Z}^{2d},G)\quot G$
to $\hom(\mathbb{Z}^{2d},K)/K$.\end{thm}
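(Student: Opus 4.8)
The plan is to deduce Theorem \ref{thm2} by transporting the deformation retraction of moduli spaces (Theorem \ref{thm:SDR-AV}) across the non-abelian Hodge correspondence \eqref{eq:NAHT}. First I would assemble the Higgs-bundle description of both sides of the statement. By the homeomorphism \eqref{eq:NAHT}, the source $\hom(\mathbb{Z}^{2d},G)\quot G = R_{\mathbb{Z}^{2d}}(G)$ is identified with $\mathcal{H}_X^F(G)$, the moduli of semistable $G$--Higgs bundles with vanishing rational characteristic classes. For the target, I would apply the same correspondence to the compact group $K$: since every representation into $K$ is reductive and $K$ is its own maximal compact, $R_{\mathbb{Z}^{2d}}(K) = \hom(\mathbb{Z}^{2d},K)/K$, and by Remark \ref{rem:2-special-cases} a $K$--Higgs bundle is simply a principal $H = K_{\mathbb{C}}$--bundle (because $\mathfrak{m}=0$ forces $\theta=0$). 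Thus the non-abelian Hodge correspondence identifies $\hom(\mathbb{Z}^{2d},K)/K$ with $\mathcal{P}_X^F(H)$, the moduli of semistable principal $H$--bundles with vanishing rational characteristic classes.

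Next I would observe that the numerical type $\tau$ in Theorem \ref{thm:SDR-AV} is exactly pinned down by the vanishing-rational-characteristic-classes condition: the relevant type is the trivial one, $\tau = F$, so that $\mathcal{H}_X^\tau(G) = \mathcal{H}_X^F(G)$ and $\mathcal{P}_X^\tau(H) = \mathcal{P}_X^F(H)$. Under these identifications, the inclusion $\mathcal{P}_X^F(H)\,\hookrightarrow\,\mathcal{H}_X^F(G)$ obtained by setting $\theta=0$ corresponds on the representation side to the inclusion $\hom(\mathbb{Z}^{2d},K)/K\,\hookrightarrow\,\hom(\mathbb{Z}^{2d},G)\quot G$. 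Theorem \ref{thm:SDR-AV} then supplies a strong deformation retraction $\varphi_t$ of $\mathcal{H}_X^F(G)$ onto $\mathcal{P}_X^F(H)$, namely $(E,\theta)\mapsto(E,t\theta)$, and conjugating this homotopy by the homeomorphisms of \eqref{eq:NAHT} produces a strong deformation retraction of $\hom(\mathbb{Z}^{2d},G)\quot G$ onto $\hom(\mathbb{Z}^{2d},K)/K$, which is the assertion.

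The main obstacle, and the point deserving the most care, is verifying that the two ends of the diagram match correctly and that the extension of \eqref{eq:NAHT} to the compact target really yields $\mathcal{P}_X^F(H)$ rather than some larger space. Concretely, I would need to check that the homeomorphism of \eqref{eq:NAHT} is compatible with passing to the subgroup $K\subset G$, i.e.\ that it restricts to the $K$--version of non-abelian Hodge theory identifying $\hom(\mathbb{Z}^{2d},K)/K$ with the zero-Higgs-field locus $\mathcal{P}_X^F(H)\subset\mathcal{H}_X^F(G)$. This is a functoriality statement for the correspondence, and the Remark preceding Theorem \ref{thm2} (reducing monodromy questions to a smooth ample curve $C\subset X$ via the surjection $\pi_1(C)\twoheadrightarrow\pi_1(X)$) together with \cite[Theorem 3.32]{GGM} is precisely what I would invoke to justify it. The homotopy-theoretic part is then formal: a strong deformation retraction is preserved under conjugation by homeomorphisms, so no further topological work is required once the identifications are established.
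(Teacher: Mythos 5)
Your proposal is correct and follows essentially the same route as the paper: identify $\hom(\mathbb{Z}^{2d},G)\quot G$ with $\mathcal{H}_X^F(G)$ via the non-abelian Hodge correspondence \eqref{eq:NAHT}, identify $\hom(\mathbb{Z}^{2d},K)/K$ with $\mathcal{P}_X^F(H)$ (the paper cites Ramanathan \cite{Ra1} for this step, where you invoke the $K$--version of the correspondence — the same fact), and transport the explicit retraction $(E,\theta)\mapsto(E,t\theta)$ of Theorem \ref{thm:SDR-AV} through these homeomorphisms. The compatibility point you flag (that \eqref{eq:NAHT} restricts to the zero-Higgs-field locus) is the right thing to check and is handled implicitly in the paper's two-line proof.
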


\begin{proof}
The representation space $\hom(\mathbb{Z}^{2d},K)/K$ is identified
with $\mathcal{P}_{X}^{F}(K_{\mathbb{C}})\,=\,\mathcal{P}_{X}^{F}(H)$
\cite{Ra1}. Therefore, the homeomorphism
in \eqref{eq:NAHT} and Theorem \ref{thm:SDR-AV} together complete the proof.\end{proof}

\begin{rem}
Theorem \ref{thm2} generalizes the result in \cite[Theorem 1.1]{FL} to the case
of real reductive algebraic groups. Note that in \cite{FL} the deformation
retraction is not explicit, whereas here it follows from an explicit
deformation retraction in the proof of Theorem \ref{thm:SDR-AV}.
\end{rem}
The following is the analog of Corollary \ref{cor-i}:
\begin{cor}
\label{cor:char-var-homot}For fixed $d\in\mathbb{N}$, the homotopy
type of the $G$-character varieties of $\mathbb{Z}^{2d}$ only depend
on the maximal compact subgroup $K$ of $G$. In other words, if $G_{1}$
and $G_{2}$ are two real reductive groups with isomorphic maximal compact
subgroup $K$, then $\hom(\mathbb{Z}^{2d},G_{1})\quot G_{1}$ and
$\hom(\mathbb{Z}^{2d},G_{2})\quot G_{2}$ have the same homotopy type.\end{cor}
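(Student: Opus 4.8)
The plan is to mirror the proof of Corollary \ref{cor-i}, now using Theorem \ref{thm2} in place of Theorem \ref{thm:SDR-AV}. The guiding observation is that the target of the deformation retraction in Theorem \ref{thm2}, namely the orbit space $\hom(\mathbb{Z}^{2d},K)/K$, is built entirely from the maximal compact subgroup $K$ and makes no reference to the ambient reductive group $G$. Thus both character varieties collapse, up to homotopy, onto a space that depends only on $K$.

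Write $K_1\,\subset\, G_1$ and $K_2\,\subset\, G_2$ for the maximal compact subgroups, with a fixed isomorphism $K_1\,\cong\, K_2$. First I would apply Theorem \ref{thm2} separately to $G_1$ and to $G_2$. As a strong deformation retraction is in particular a homotopy equivalence, this gives
\[
\hom(\mathbb{Z}^{2d},G_1)\quot G_1 \,\simeq\, \hom(\mathbb{Z}^{2d},K_1)/K_1
\quad\text{and}\quad
\hom(\mathbb{Z}^{2d},G_2)\quot G_2 \,\simeq\, \hom(\mathbb{Z}^{2d},K_2)/K_2\,.
\]
It then remains only to identify the two right-hand sides, after which chaining the equivalences yields the asserted homotopy equivalence between the two character varieties.

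The sole step needing justification --- and the only place where one does any work at all --- is that the isomorphism $\phi\colon K_1\,\to\, K_2$ induces a homeomorphism $\hom(\mathbb{Z}^{2d},K_1)/K_1\,\to\,\hom(\mathbb{Z}^{2d},K_2)/K_2$. This is routine: post-composition with $\phi$ carries a commuting tuple in $K_1$ to a commuting tuple in $K_2$, defining a homeomorphism $\hom(\mathbb{Z}^{2d},K_1)\,\to\,\hom(\mathbb{Z}^{2d},K_2)$ that intertwines the two conjugation actions and hence descends to the quotients. No analytic input beyond Theorem \ref{thm2} is required; the statement is a formal consequence of it, exactly as Corollary \ref{cor-i} was a formal consequence of Theorem \ref{thm:SDR-AV}.
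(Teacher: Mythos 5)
Your proposal is correct and is exactly the argument the paper intends: the corollary is stated as the analog of Corollary \ref{cor-i}, whose proof likewise applies the relevant deformation retraction to each group and identifies the common retract depending only on $K$. Your explicit verification that an isomorphism $K_1\cong K_2$ induces a homeomorphism of the orbit spaces $\hom(\mathbb{Z}^{2d},K_i)/K_i$ is a small but welcome addition that the paper leaves implicit.
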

\begin{example}
\mbox{} 
\begin{enumerate}
\item Consider $d=1$. It is well known that the moduli space of principal
$\mathsf{SL}(n,\mathbb{C})$-bundles (i.e., rank $n$ vector bundles
with trivial determinant) on an elliptic curve has the structure of
a projective space $\mathbb{C}\mathbb{P}^{n-1}$ (see \cite{T}).
Thus, by Corollary \ref{cor:char-var-homot}, if $G$ is any real
reductive group whose maximal compact subgroup is ${\rm SU}(n)$,
then the space of conjugation classes of pairs of commuting elements
in $G$, \[
\hom(\mathbb{Z}^{2},G)\quot G\,,\]
 has the homotopy type of $\mathbb{C}\mathbb{P}^{n-1}$. 
\item Take again $d=1$ with $K$ to be a maximal compact subgroup of ${\rm Sp}(2n,{\mathbb{C}})$.
In \cite[Corollary 1.5]{ACG} it is shown that $\hom(\mathbb{Z}^{2},K)/K$
is homeomorphic to $\mathbb{C}\mathbb{P}^{n}$. Therefore, for any
real reductive $G$ with maximal compact $K$, the space of representations
$\hom(\mathbb{Z}^{2},G)\quot G$ has the homotopy type of $\mathbb{C}\mathbb{P}^{n}$. 
\end{enumerate}
\end{example}
More generally, let $\hom(\mathbb{Z}^{2},G)^{0}$ denote the connected
component of $\hom(\mathbb{Z}^{2},G)^{+}$ containing the trivial
representation. 
\begin{cor}
The space $\hom(\mathbb{Z}^{2},G)^{0}/G$ has the homotopy type of
a weighted complex projective space of complex dimension equal to
the rank of $H=K_{\mathbb{C}}$.\end{cor}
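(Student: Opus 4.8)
The plan is to identify the space $\hom(\mathbb{Z}^{2},G)^{0}/G$ with a single connected piece of the moduli space appearing in Example \ref{exa:Laszlo}, and then read off its homotopy type from the weighted-projective-space description there. First I would fix $d=1$, so that $X$ is an elliptic curve, and invoke Theorem \ref{thm2}: the space $\hom(\mathbb{Z}^{2},G)\quot G$ deformation retracts onto $\hom(\mathbb{Z}^{2},K)/K$, which by the proof of that theorem is identified with the moduli space $\mathcal{P}_{X}^{0}(H)$ of topologically trivial principal $H$--bundles on $X$, where $H=K_{\mathbb{C}}$. The component $\hom(\mathbb{Z}^{2},G)^{0}$ containing the trivial representation corresponds, under the homeomorphism of \eqref{eq:NAHT} and the retraction, to the connected component of $\mathcal{P}_{X}^{0}(H)$ containing the trivial bundle. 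Since the deformation retraction of Theorem \ref{thm:SDR-AV} is given by scaling the Higgs field, it preserves connected components and maps $\hom(\mathbb{Z}^{2},G)^{0}/G$ onto this distinguished component; hence the two have the same homotopy type.

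The next step is to compute the homotopy type of that component. By Example \ref{exa:Laszlo} there is an isomorphism
\[
\mathcal{P}_{X}^{0}(H)\,\cong\,\left(\Gamma(T_{H})\otimes_{\mathbb{Z}}X\right)/W\,,
\]
where $T_{H}\subset H$ is a maximal torus, $\Gamma(T_{H})$ is its cocharacter lattice, and $W$ is the Weyl group. The topologically trivial bundles correspond to the degree-zero part, and because $X$ is connected as a group, $\Gamma(T_{H})\otimes_{\mathbb{Z}}X$ is a connected abelian variety (a product of copies of $X$ indexed by a basis of $\Gamma(T_{H})$), so its $W$-quotient is already connected. Thus $\mathcal{P}_{X}^{0}(H)$ is itself connected and equals this full quotient, which is the weighted projective space referred to in the example; its complex dimension is the rank of $T_{H}$, namely $\rk H = \rk K_{\mathbb{C}}$. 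Assembling the two steps gives that $\hom(\mathbb{Z}^{2},G)^{0}/G$ is homotopy equivalent to a weighted complex projective space of dimension $\rk H$.

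The main obstacle I anticipate is the bookkeeping that matches the identity component $\hom(\mathbb{Z}^{2},G)^{0}$ precisely with the topologically trivial moduli space $\mathcal{P}_{X}^{0}(H)$, rather than with some other union of components of $\hom(\mathbb{Z}^{2},K)/K$. One must check that the homeomorphism \eqref{eq:NAHT} sends the trivial representation to the trivial Higgs bundle and induces a bijection on $\pi_{0}$ compatible with the vanishing of rational characteristic classes; this is where the numerical-type superscript $0$ versus $F$ must be reconciled. Once it is confirmed that the component of the trivial representation is exactly the topologically trivial locus—which is natural since the trivial representation has trivial monodromy and hence trivial associated bundle—the weighted-projective-space conclusion of Example \ref{exa:Laszlo} applies verbatim, and the dimension count $\dim_{\mathbb{C}}T_{H}=\rk H$ finishes the argument.
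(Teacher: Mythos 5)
Your proposal is correct and follows essentially the same route as the paper: restrict the deformation retraction of Theorem \ref{thm2} to the component of the trivial representation, identify the resulting retract with $\mathcal{P}_{X}^{0}(H)$ for $X$ an elliptic curve, and conclude via Laszlo's description in Example \ref{exa:Laszlo}. The extra bookkeeping you supply (connectedness of $\left(\Gamma(T_{H})\otimes_{\mathbb{Z}}X\right)/W$ and the matching of the identity component with the topologically trivial locus) is exactly what the paper leaves implicit.
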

\begin{proof}
From Theorem \ref{thm2}, $\hom(\mathbb{Z}^{2},G)^{0}/G$ has the
homotopy type of $\hom(\mathbb{Z}^{2},K)^{0}/K=\mathcal{P}_{X}^{0}(H)$,
where $X$ is an elliptic curve. From the result of Laszlo \cite{La}
described in Example \ref{exa:Laszlo}, we have \[
\mathcal{P}_{X}^{0}(H)\cong\left(\Gamma(T_{H})\otimes_{\mathbb{Z}}X\right)/W_{H},\]
where $\Gamma(T_{H})$ is the group of 1-parameter subgroups of a
maximal torus $T$ of $H=K_{\mathbb{C}}$, and $W_{H}$ is the Weyl
group of $H$. In particular, $\mathcal{P}_{X}^{0}(H)$ is a weighted
projective space of the required dimension.
\end{proof}

When $K$ is a compact semisimple Lie group, the spaces of representations
which are path connected to the trivial representation have been obtained in \cite{KS}. As an
application, we provide a sufficient condition for path connectedness
of representation spaces for the real reductive groups $G$.

Let $Sp(m)$ be the maximal compact subgroup of $\text{Sp}(2m,{\mathbb C})$.

\begin{cor}
Let $G$ be a real reductive group whose maximal compact subgroup
is a product of unitary groups ${\rm SU}(n)$ and groups of the
form $\text{Sp}(2m,{\mathbb C})$. Then, \[
\hom(\mathbb{Z}^{2d},G)\quot G\]
is path connected.\end{cor}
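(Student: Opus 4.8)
The plan is to reduce the path connectedness of $\hom(\mathbb{Z}^{2d},G)\quot G$ to that of $\hom(\mathbb{Z}^{2d},K)/K$, and then to the path connectedness of the moduli space $\mathcal{P}_X^F(H)$ on an abelian variety $X$ of dimension $d$. By Theorem \ref{thm2}, the space $\hom(\mathbb{Z}^{2d},G)\quot G$ strongly deformation retracts onto $\hom(\mathbb{Z}^{2d},K)/K$, so the two have the same homotopy type; in particular one is path connected if and only if the other is. Thus it suffices to establish path connectedness of $\hom(\mathbb{Z}^{2d},K)/K$, which under the identification in the proof of Theorem \ref{thm2} equals $\mathcal{P}_X^F(H)=\mathcal{P}_X^F(K_{\mathbb{C}})$, the moduli space of semistable principal $H$--bundles of vanishing rational characteristic classes.

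Next I would use the hypothesis that $K$ is a product of factors, each either ${\rm SU}(n)$ or $Sp(m)$ (the maximal compact subgroup of $\text{Sp}(2m,\mathbb{C})$). Since the construction of both the representation space and the moduli space is compatible with direct products of groups, I would reduce to the case of a single factor: $\hom(\mathbb{Z}^{2d},K_1\times K_2)/(K_1\times K_2)$ is homeomorphic to the product of the corresponding spaces for $K_1$ and $K_2$, and a product of path connected spaces is path connected. So the problem reduces to showing $\hom(\mathbb{Z}^{2d},K)/K$ is path connected when $K$ is a single factor ${\rm SU}(n)$ or $Sp(m)$. For these specific compact groups the relevant input comes from \cite{KS}: when $K$ is compact semisimple, the results there describe which components of the representation space $\hom(\mathbb{Z}^{2d},K)$ are path connected to the trivial representation, and in fact for the simply connected groups ${\rm SU}(n)$ and $Sp(m)$ the space $\hom(\mathbb{Z}^{2d},K)$ is connected. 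I would invoke this to conclude that each single-factor quotient is path connected.

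The main obstacle I expect is verifying that the whole space $\hom(\mathbb{Z}^{2d},K)$ is connected for each factor, rather than merely that one distinguished component is. The reference \cite{KS} addresses path connectedness to the trivial representation, so the clean statement I can cite directly concerns the component $\hom(\mathbb{Z}^{2d},K)^0$ containing the trivial representation. The crucial point, which I would emphasize, is that for $K$ simply connected and semisimple the space $\hom(\mathbb{Z}^{2d},K)$ of commuting tuples \emph{is} connected, so this distinguished component is all of $\hom(\mathbb{Z}^{2d},K)$; this is precisely the property that fails for non--simply--connected $K$ and is the reason the corollary restricts to factors ${\rm SU}(n)$ and $Sp(m)$, both of which are simply connected. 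Passing to the quotient by the connected group $K$ preserves connectedness, and since these spaces are semialgebraic they are locally path connected, so connectedness upgrades to path connectedness.

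Thus the argument assembles as follows: product decomposition reduces to single factors; the simple connectivity of ${\rm SU}(n)$ and $Sp(m)$ together with \cite{KS} gives connectedness of $\hom(\mathbb{Z}^{2d},K)$ for each factor; the continuous surjection $\hom(\mathbb{Z}^{2d},K)\to \hom(\mathbb{Z}^{2d},K)/K$ transports connectedness to the quotient; semialgebraicity yields local path connectedness and hence path connectedness; and finally Theorem \ref{thm2} transports the conclusion back to $\hom(\mathbb{Z}^{2d},G)\quot G$.
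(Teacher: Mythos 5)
Your proposal is correct and follows essentially the same route as the paper: reduce to $\hom(\mathbb{Z}^{2d},K)/K$ via Theorem \ref{thm2}, split off the product factors, and invoke \cite{KS} for the connectedness of the commuting varieties of ${\rm SU}(n)$ and $Sp(m)$. Your added remarks on simple connectivity of the factors and on upgrading connectedness to path connectedness via local path connectedness of semialgebraic sets are useful elaborations of points the paper leaves implicit, but they do not change the argument.
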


\begin{proof}
In \cite{KS} it is shown that for every $d\,\geq\,1$,
the space $\hom(\mathbb{Z}^{2d},K)/K$
is connected when $K\,=\,{\rm SU}(n)$ or $K\,=\,Sp(m)$. Therefore,
the result follows immediately from Theorem \ref{thm2}, together
with the fact that given two real reductive groups $G_{1}$, $G_{2}$,
we have a canonical identification of character varieties\[
\hom(\mathbb{Z}^{2d},G_{1}\times G_{2})\quot(G_{1}\times G_{2})
\,\cong\,
\left(\hom(\mathbb{Z}^{2d},G_{1})\quot G_{1}\right)\times
\left(\hom(\mathbb{Z}^{2d},G_{2})\quot G_{2}\right).\]
This completes the proof.
\end{proof}

\section*{Acknowledgments}

We thank the referee for helpful comments.
The second author would like to thank the Tata Institute of Fundamental
Research (TIFR), Mumbai, for hospitality while part of this work was
done.


\begin{thebibliography}{ABCD}

\bibitem[ACG]{ACG}A. Adem, F. R. Cohen and J. M. Gomez, Stable splittings,
spaces of representations and almost commuting elements in Lie groups,
\textit{Math. Proc. Camb. Phil. Soc.} \textbf{149} (2010), 455--490.

\bibitem[AB]{AB}B. Anchouche and I. Biswas, Einstein--Hermitian connections
on polystable principal bundles over a compact K\"ahler manifold,
\textit{Amer. Jour. Math.} \textbf{123} (2001), 207--228.

\bibitem[BG]{BG}I. Biswas and T. L. G\'omez, Connections and Higgs
fields on a principal bundle, \textit{Ann. Glob. Anal. Geom.} \textbf{33}
(2008), 19--46.

\bibitem[BS]{BS} I. Biswas and G. Schumacher, Yang--Mills equation for stable Higgs
sheaves, \textit{Internat. Jour. Math.} \textbf{20} (2009), 541--556.

\bibitem[BFM]{BFM}A. Borel, R. Friedman, and J. W. Morgan, Almost
commuting elements in compact Lie groups. \emph{Mem. Amer. Math. Soc.}
\textbf{157} (2002), no. 747. 

\bibitem[DP]{DP}A. Dey and R. Parthasarathi, On Harder-Narasimhan
reductions for Higgs principal bundles, \textit{Proc. Indian Acad.
Sci. (Math. Sci.)} \textbf{115} (2005), 127--146.

\bibitem[FL]{FL}C. Florentino and S. Lawton, Topology of character
varieties of Abelian groups, preprint (2013), arxiv:1301.7616.

\bibitem[GGM]{GGM}O. Garc\'{\i}a-Prada, P. B. Gothen, and I. Mundet
{\i}~Riera, The Hitchin-Kobayashi correspondence, Higgs pairs and surface
group representations, arXiv:0909.4487.

\bibitem[Go]{Go} P. B. Gothen, Surface group representations and
Higgs bundles, preprint, arXiv:1209.0568.

\bibitem[Hi1]{Hi1}N. J. Hitchin, The self-duality equations on a
Riemann surface, \textit{Proc. London Math. Soc.} \textbf{55} (1987),
59--126.

\bibitem[Hi2]{H1}N. J. Hitchin, Lie groups and Teichm\"uller space,
\textit{Topology} \textbf{31} (1992), 449--473.

\bibitem[KS]{KS}V. G. Kac and A. V. Smilga, Vacuum structure
in supersymmetric Yang-Mills theories with any gauge group. \textit{The many
faces of the superworld}, 185\textendash{}234, World Sci. Publ., River
Edge, NJ, 2000.

\bibitem[La]{La}Y. Laszlo, About G-bundles over elliptic curves,
{\it Ann. Inst. Fourier} \textbf{48} (1998), 413--424.

\bibitem[Lu]{Lu}D. Luna, Sur certaines op\'{e}rations diff\'{e}rentiables des groupes de Lie,
{\it Amer. J. Math.} \textbf{97} (1975), 172--181.

\bibitem[RR]{RR}S. Ramanan and A. Ramanathan, Some remarks on the
instability flag, \textit{Tohoku Math. Jour.} \textbf{36} (1984),
269--291.

\bibitem[Ra1]{Ra1}A. Ramanathan, Stable principal bundles on a compact
Riemann surface, \textit{Math. Ann.} \textbf{213} (1975), 129--152.

\bibitem[Ra2]{Ra}A. Ramanathan, Moduli for principal bundles over
algebraic curves. I, \textit{Proc. Indian Acad. Sci. (Math. Sci.)}
\textbf{106} (1996), 301\textendash{}328.

\bibitem[RS]{RS}R. W. Richardson and P. J. Slodowy, Minimum vectors
for real reductive algebraic groups, \textit{J. London Math. Soc.}
\textbf{42} (1990), 409\textendash{}429.

\bibitem[Sc]{Sch}A. Schmitt, \textit{Geometric invariant theory and
decorated principal bundles}, Zurich Lectures in Advanced Mathematics,
European Mathematical Society (EMS), Zurich, 2008.

\bibitem[Si]{Sim}C. T. Simpson. Higgs bundles and local systems,
\textit{Inst. Hautes \'Etudes Sci. Publ. Math.} \textbf{75} (1992),
5\textendash{}95.

\bibitem[Th]{Th}M. Thaddeus, Mirror symmetry, Langlands duality,
and commuting elements of Lie groups \emph{Internat. Math. Res. Not.}
\textbf{22} (2001), 1169\textendash{}1193.

\bibitem[Tu]{T}L. Tu, Semistable bundles over an elliptic curve,
\textit{Adv. Math.} \textbf{98} (1993), 1\textendash{}26. 
\end{thebibliography}
\end{document}